\theoremstyle{definition}
\newtheorem{thm}{Theorem}[section]
\newtheorem{ex}[thm]{Example}
\newtheorem{defi}[thm]{Definition}
\theoremstyle{remark}
\newtheorem{rem}{Remark}[section]
\newtheorem*{acknowledgment}{Acknowledgments}
\newcommand{\atil}{\widetilde{\alpha}}
\title{Refined Dijkgraaf-Witten invariant of spin 3-manifolds}
\author{Serban Matei Mihalache}%
\address{Department of Mathematics, Tohoku University, 
6-3, Aoba, Aramaki-aza, Aoba-ku, 
Sendai, 980-8578, Japan}
\email{matei.mihalache.q3@dc.tohoku.ac.jp}
\begin{document}1
\maketitle

\begin{abstract}
We give a construction of a state sum invariant of a closed spin 3-manifold based on a super 3-cocycle $(\atil, \omega)$ and a combinatorial representation of a spin 3-manifold, where $\omega$ is a $\mathbb{Z}_2$-valued cocycle and $\atil$ is a 3-cochain satisfying a 3-cocycle condition with a sign coming from the 2-cocycle $\omega$.
The definition of the invariant is similar to the state sum construction of the Dijkgraaf-Witten invariant, except it uses the spin structure to take care of the sign in the 3-cocycle condition.
We also give an example of the invariant and see that it is sensitive to the spin structure.
\end{abstract}


\section{Introduction}
In the '90s, R. Dijkgraaf and E. Witten \cite{DW} introduced a topological invariant of an oriented 3-manifold $M$, now called Dijkgraaf-Witten invariant based on $\text{U}(1)$-valued 3-cocycle $d$ of a finite group $G$:
\begin{align*}
    \text{Z}_{\alpha}(M)&=\frac{1}{|G|}\sum_{f\in\text{Hom}(\pi_1(M),G)}<f^*([\alpha]),[M]>,
\end{align*}
where $[\alpha]$ is a cohomology class in $H^3(G;\text{U}(1))$, and $[M]$ is a fundamental class of $M$.
Essentially, this invariant can be thought of as looking at the characteristic class of all the principal $G$-bundle over $M$ defined by $[\alpha]$.
Although the definition of the invariant is easy, trying to calculate based on this definition is not simple.
In the same paper, they also suggested a construction based on the state sum model, which was later proved to be an invariant \cite{Wakui}.


In this paper we consider a generalization of the Dijkgraaf-Witten invariant using a super 3-cocycle $(\atil,\omega)$ of finite group $G$ introduced by Z.-C. Gu and X.-G. Wen in \cite{GW} where $\atil$ and $\omega$ are 3-cochain and 2-cocycle respectively, satisfying the following equation:
\begin{align}
    \atil(g,h,k)\,\atil(g,hk,l)\,\atil(h,k,l)&=(-1)^{\omega(g,h)\omega(k,l)}\atil(gh,k,l)\,\atil(g,h,kl),
\end{align}
for all $g,h,k,l\in G$.
A similar construction was already proposed by D. Gaiotto and A. Kapustin \cite{GK} where they introduced the Gu-Wen Grassmann integral (see also \cites{Tata, Kob}) to define invariants of spin 3-manifold using super 3-cocycles, or more generally super fusion categories.
\\

Here, we give a construction based on the combinatorial representation of closed spin 3-manifolds given by R. Benedetti and C. Petronio, called spin normal o-graphs.
Using this combinatorial representation, the closed spin 3-manifolds can be represented as oriented virtual knots with $\mathbb{Z}_2$ weights on each edges (Definition \ref{def:spin normal o-graph}).
In order to define the invariant, we slightly modify the spin normal o-graphs and introduce the planar spin normal o-graphs.
Using the planar normal o-graph, we give a state sum construction using the super 3-cocycle $(\atil,\omega)$.
In \cite{BP}, the authors introduced certain moves on spin normal o-graph and showed that two spin normal o-graphs representing the same closed spin 3-manifold are connected by finite sequence of these moves.
Modifying these moves to planar spin normal o-graphs, we show that the state sum construction is indeed invariant under the moves, thus showing that this is an invariant of closed spin 3-manifolds.
We also give an example of the invariant and see that the invariant is indeed sensible to the spin structure.

\begin{acknowledgment}
 I would like to thank Y. Terashima and S. Suzuki for valuable discussions. This work is supported by JSPS KAKENHI Grant Number JP 22J11429.
\end{acknowledgment}

\section{Calculus of spin 3-manifold}\label{sec:Calculus of spin 3-manifold}
In order to define the state sum, we need a combinatorial description of spin 3-manifolds.
There are few ways to do this \cites{Saw, KR, BR}.
In this paper, we make use of the one given by R. Benedetti and C. Petronio in \cite{BP}, which is based on branched ideal triangulations (or branched standard spines in terms of dual perspective).

\subsection{Spin normal o-graph}\label{subsec:Spin normal o-graph}

\begin{defi}[\cite{BP}]\textbf{}
\label{def:bp}
A \textbf{normal o-graph} is an oriented virtual link diagram, i.e., 
a finite connected $4$-valent graph $\Gamma$ immersed in $\mathbb{R}^2$ with the following conditions:
\begin{spacing}{0.5}
\end{spacing}
\begin{description}
\setlength{\itemsep}{1mm}
\setlength{\parskip}{1mm} 
\item[N1] At each vertex, a sign $+$ or $-$ is indicated, which is represented by the over-under notation as in Figure \ref{fig:tetrahedra_to_crossing}, 
\item[N2] Each edge has an orientation such that it matches among two edges which are opposite to each other at a vertex.
\end{description}
\end{defi}

Since the normal o-graph $\Gamma$ is immersed in $\mathbb{R}^2$, there are two types of crossings: \textbf{true vertex} and the \textbf{fake crossing}, where the true vertex, which is depicted as a crossing with a distinguished dot in the middle, comes from the original vertex of the graph, and the fake crossing comes from the immersion.

Given a normal o-graph $\Gamma$, one can canonically construct an oriented $3$-manifold $M(\Gamma)$ as follows.
We fix an orientation of $\mathbb{R}^3$ and place the normal o-graph on $\mathbb{R}^2\subset \mathbb{R}^3$.
Then we replace each of its vertices with a tetrahedron (with the orientation given by $\mathbb{R}^3$), and glue the faces of the ideal tetrahedra. The way to glue the faces of ideal tetrahedra is specified by the order of vertices of ideal tetrahedra defined as in Figure \ref{fig:tetrahedra_to_crossing}, i.e.,
we glue faces by the orientation reversing map which preserves the order of vertices.
We write the resulting triangulation by $T$.
Deleting all the regular neighborhoods of the vertices of $T$, we get the 3-manifold $M(\Gamma)$ with non-empty boundary, and $T$ is the ideal triangulation of $M(\Gamma)$.
Note that this ideal triangulation has an additional structure called \textbf{branching}, which is a choice of orientation of edges such that for every face of the ideal triangulation, the edge orientation is not cyclic.
A tetrahedron in a branched ideal triangulation is called positive/negative if it is homeomorphic to the left/right hand side of branched tetrahedra in Figure \ref{fig:tetrahedra_to_crossing} i.e., positive/negative if it corresponds to the true vertex of type $+$/$-$.

\begin{figure}[H]
    \centering
    \includegraphics{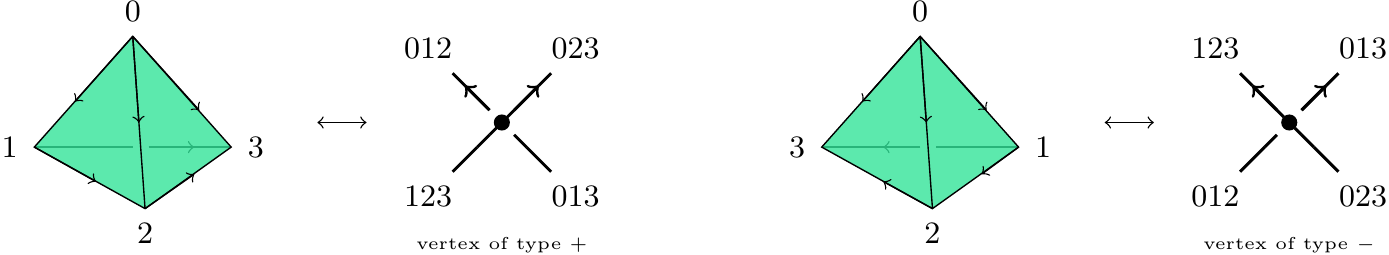}
    \caption{The correspondence between branched tetrahedra and crossing.}
    \label{fig:tetrahedra_to_crossing}
\end{figure}

\begin{defi}[\cite{BP}]
    A normal o-graph $\Gamma$ is called closed if $\Gamma$ satisfies the following conditions:
\begin{description}
\setlength{\itemsep}{1mm}
\setlength{\parskip}{1mm} 
    \item[C1] If one removes the true vertices of $\Gamma$ and joins the edges which are opposite to each other, the result is a unique oriented circuit,
    \item[C2] Ideal triangulation $T$ constructed from $\Gamma$ has a single vertex,
    \item[C3] The number of edges of $T$ is exactly one more than the number of tetrahedra in $T$.
\end{description}
\end{defi}

The conditions on closed normal o-graph $\Gamma$ ensure that the resulting manifold $M(\Gamma)$ is an ideally triangulated 3-manifold with $S^2$ boundary, thus we can cap off the boundary by adding $B^3$ and the result is an closed oriented 3-manifold.

Consider the union of closed oriented circuits (with dots) associated with the underline closed normal o-graph $\Gamma$ by the rule in Figure \ref{fig:heegaard diagram}.
For any closed circuit $\gamma$, there exist an oriented loop $e_1\cdots e_p$ in $\Gamma$ which corresponds to $\gamma$.
Here, $e_1,\cdots ,e_p$ represent edges of $\Gamma$, and there may be some duplication among them.

\begin{defi}[\cite{BP}]\label{def:spin normal o-graph}
A \textbf{spin normal o-graph} $\Gamma$ is a closed normal o-graph with $\mathbb{Z}_2$ weights on each edge, satisfying the following condition:
\begin{spacing}{0.5}
\end{spacing}
\begin{description}
\setlength{\itemsep}{1mm}
\setlength{\parskip}{1mm} 
\item[S] The following process should occur for each circuit $\gamma$ featuring solid dots in Figure \ref{fig:heegaard diagram}. Let $e_1\cdots e_p$ denote the loop in $\Gamma$ that corresponds to $\gamma$. For each edge $e_i$, let $x_i\in\mathbb{Z}_2$ be the weight attached to the edge $e_i$. If there are $m$ solid dots on $\gamma$, then the equation $\sum_{i=1}^p x_i\equiv \frac{m}{2}+1\mod 2$ should hold.
\end{description}
\end{defi}

Given a spin normal o-graph $\Gamma_s$, the above mentioned construction produces a closed oriented 3-manifold $M(\Gamma_s)$ with $\mathbb{Z}_2$ weights producing a trivialization of the tangent bundle $TM(\Gamma_s)$ over the 1-skeleton of the dual polyhedron of $T$.
The condition \textbf{S} of the spin normal o-graph ensures that this trivialization extends over the 2-skeleton, thus defining a spin structure.
For more details see \cite{BP}*{Chapter 7}.

It is shown in \cite{BP} that every closed spin 3-manifold can be represented by a spin normal o-graph and two spin normal o-graphs are connected by a finite sequence of local moves depicted in Figures \ref{fig:Reidemeister_type_moves} to \ref{fig:Z_2_branched_MP_move} if and only if they define the same spin 3-manifold.

\subsection{Planar spin normal o-graph}\label{subsec:planar spin normal o-graph}
Let $\Gamma$ be a normal o-graph.
A planar normal o-graph is a $\Gamma$ immersed in $\mathbb{R}^2$ such that there are no inflection points with respect to $y$-axis and all the orientations of the true vertices are going upwards.
For an edge $e$ of a planar normal o-graph, let $w(e) \in \mathbb{Z}$ be the winding number of an edge, which is calculated as sums of fractions $\frac{1}{2}$ or $-\frac{1}{2}$ attached to the maximal/minimal point on the edge $e$, see Figure \ref{fig:winding_number}.

\begin{figure}[H]
  \begin{minipage}[b]{0.45\linewidth}
    \centering
    \includegraphics{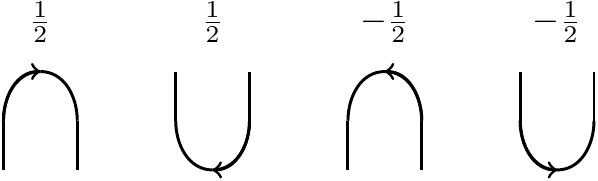}
    \caption{The winding number}
    \label{fig:winding_number}
  \end{minipage}
  \begin{minipage}[b]{0.45\linewidth}
    \centering
    \includegraphics[keepaspectratio, scale=1]{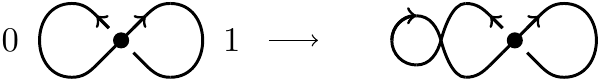}
    \caption{Example of planar spin normal o-graph}
    \label{fig:Example of planer spin normal o-graph}
  \end{minipage}
\end{figure}

Now, let us consider a spin normal o-graph $\Gamma_s$.
Recall that in this case, each edge $e$ of $\Gamma_s$ has $\mathbb{Z}_2$ weights $z(e)$ attached on.
A \textbf{planar spin normal o-graph} $P$ is a planar normal o-graph of $\Gamma_s$ such that the winding number modulo 2 of each edge is matching with $\mathbb{Z}_2$ weights, i.e., $w(e)\equiv z(e)$ mod 2 for every edge $e\in\Gamma_s$.
Given a spin normal o-graph $\Gamma_s$ (up to moves in Figure \ref{fig:Reidemeister_type_moves}), the planar normal o-graph is not unique, but for two planar normal o-graph of $\Gamma_s$ there always exist a finite sequence of Reidemeister type moves (which are depicted in Figure \ref{fig:planer_reidemeister_type_move}) connecting the one to the other.

\begin{figure}[H]
    \centering
    \includegraphics{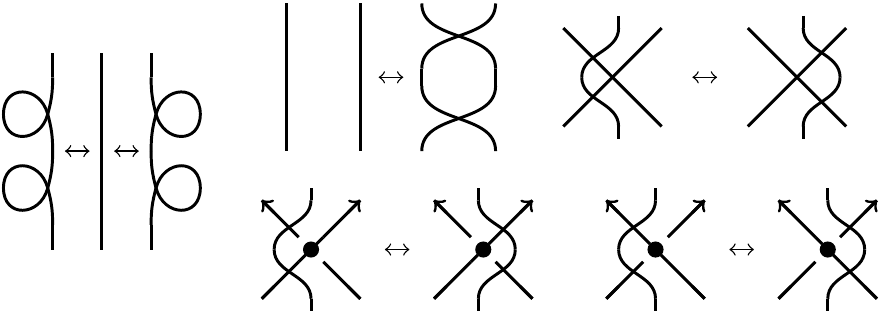}
    \caption{Reidemeister type moves of planar spin normal o-graph. The unoriented edges should be oriented so that the orientation matches before and after the move.}
    \label{fig:planer_reidemeister_type_move}
\end{figure}

\section{Invariant}\label{sec:Invariant}
\subsection{Super 3-cocycle}\label{subsec:Super 3-cocycle}

Throughout the paper, we let $G$ be a finite group and $\mathbb{K}$ a field.

\begin{defi}
A $\mathbb{K}^*$-valued \textbf{super 3-cocycle} of $G$ is a pair $(\atil,\omega)$, where $\atil$ and $\omega$ are maps
$\atil\colon G^{\times 3}\to \mathbb{K}^*$, $\omega\colon G^{\times 2}\to \mathbb{Z}_2$
which satisfies
\begin{align}
    \omega(g,h)+\omega(gh,k)&=\omega(h,k)+\omega(g,hk) \label{eq:2-cocycle}, \\
    \atil(g,h,k)\,\atil(g,hk,l)\,\atil(h,k,l)&=(-1)^{\omega(g,h)\omega(k,l)}\atil(gh,k,l)\,\atil(g,h,kl) \label{eq:3-supercocycle},
\end{align}
for all $g,h,k,l\in G$.
\end{defi}

When $\omega(g,h)$ equals $0$ for every $g,h\in G$, the super 3-cocycle is the same as the ordinary $\mathbb{K}^*$-valued 3-cocycle of a group $G$.

\begin{ex}\label{ex:3-supercocycle of cyclic group}
For a cyclic group $\mathbb{Z}_n$, let $\omega$ be a 2-cocycle:
\begin{align*}
\omega(a,b)=
  \begin{cases}
    0 & \text{if $a+b<n$,} \\
    1 & \text{if $a+b\geq n$.}
  \end{cases}
\end{align*}

Let $\atil\colon\mathbb{Z}_n\to\mathbb{C}^*$ be
\begin{align*}
    \atil(a,b,c)=\text{exp}(\frac{\pi i}{n}\omega(a,b)c)
\end{align*}
Then $(\atil,\omega)$ is a $\mathbb{C}^*$-valued super 3-cocycle of $\mathbb{Z}_n$.

\end{ex}

For more details on super cocycles, see for example \cite{GJ}.

\subsection{Definition of invariant}\label{subsec:Definition of invariant}
Let $\mathbb{K}$ be a field, $G$ a finite group and $(\atil, \omega)$ a $\mathbb{K}^*$-valued super 3-cocycle.  
For a spin 3-manifold $(M,s)$, let $P$ be a planar spin normal o-graph representing $(M,s)$ and $T$ its branched ideal triangulation constructed from $P$.
Let $E$ be the set of oriented edges of $T$.
\begin{defi}
    A map $\phi\colon E\to G$ is called an edge coloring.
    Furthermore, an edge coloring is called admissible if for every face $F$ of $T$ with edges $e_1$, $e_2$ and $e_3$, and orientation matching with the ones in Figure \ref{fig:Oriented edge of face $F$.}, the following holds:
\begin{align*}
    \phi(e_1)\phi(e_2)\phi(e_3)^{-1}=1.
\end{align*}
\end{defi}

For positive and negative tetrahedra, the admissible coloring is determined by the coloring of the edges $e_{01}$, $e_{12}$ and $e_{23}$.
For example, in case of the positive tetrahedra in the left hand side of Figure \ref{fig:Admissible coloring of tetrahedra}, the edge coloring should satisfy: $\phi(e_{02})=gh$, $\phi(e_{13})=hk$ and $\phi(e_{03})=ghk$ where $\phi(e_{01})=g$, $\phi(e_{12}) = h$ and $\phi(e_{23}) = k$.

\begin{figure}[H]
  \begin{minipage}[b]{0.45\linewidth}
    \centering
    \includegraphics[keepaspectratio, scale=1.3]{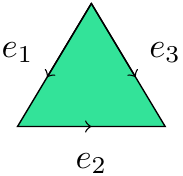}
    \caption{Oriented edges of $F$.}
    \label{fig:Oriented edge of face $F$.}
  \end{minipage}
  \begin{minipage}[b]{0.45\linewidth}
    \centering
    \includegraphics[keepaspectratio, scale=1]{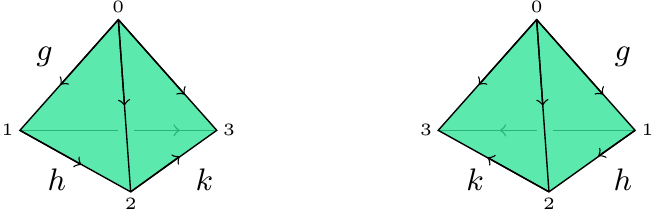}
    \caption{Admissible coloring of tetrahedron.}
    \label{fig:Admissible coloring of tetrahedra}
  \end{minipage}
\end{figure}

For admissible coloring $\phi$, let us define two scalars $W(T;\phi)$ and $\theta(P;\phi)$ where the invariant will be defined as a sum of $\theta(P;\phi)
W(T;\phi)$ over all the admissible colorings.
In order to keep the notation simple, when there is no confusion, we will not make the difference between edges and the coloring of the edges given by $\phi$.

\vskip\baselineskip
\noindent
\textbf{Definition of $W(T;\phi)$:}

For positive tetrahedra $\sigma$ in $T$ with admissible coloring given by $\phi$, we define $W(\sigma;\phi)$ as $\atil(g,h,k)$, where the specified edges $g$, $h$ and $k$ are chosen canonically using branching structure (see the left hand side of Figure \ref{fig:Admissible coloring of tetrahedra}).
For the negative tetrahedra, let $W(\sigma;\phi)$ to be $\atil(g,h,k)^{-1}$, where again the specified edges are chosen canonically from the branching structure, see the right hand side of Figure \ref{fig:Admissible coloring of tetrahedra}.
Then the scalar $W(T;\phi)$ is defined by
\begin{align*}
W(T;\phi) := \prod_{\sigma\in T}W(\sigma;\phi).
\end{align*}

\vskip\baselineskip
\noindent
\textbf{Definition of $\theta(P;\phi)$:}

Let $c$ be a fake crossing of $P$.
Recall from the construction given in Subsection \ref{subsec:Spin normal o-graph} that every edge of $P$ corresponds to a face of $T$.
Let $F_1$ and $F_2$ be faces corresponding to each edge of crossing $c$.
Then let $\theta(c\,;\phi)$ be $(-1)^{\omega(g,h)\,\omega(k,l)}$, 
where edges $g$, $h$, $k$ and $l$ are specific edges of faces $F_i$ determined by the branching structure of $T$, see the figure below.

\begin{figure}[H]
    \centering
    \includegraphics{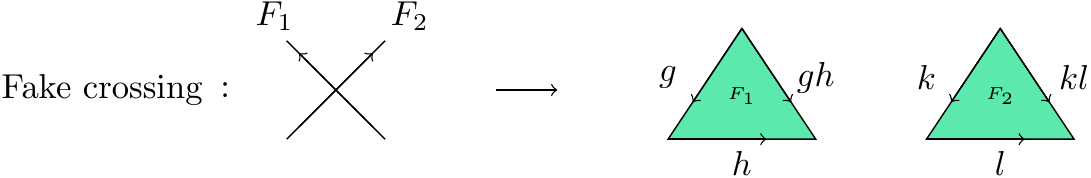}
\end{figure}
Then the $\theta(P;\phi)$ is defined as products of $\theta(c;\phi)$ over every fake crossing:
\begin{align*}
\theta(P;\phi) := \prod_{c\text{\,:\,fake crossing}}\theta(c\,;\phi).
\end{align*}

Finally, we let $Z_{\atil,\omega}(M,s)$ be the sums of product of $W(T;\phi)$ and $\theta(P;\phi)$ over the admissible coloring of $T$:
\begin{align*}
    Z_{\atil,\omega}(M,s) = \sum_{\phi}\theta(P;\phi)W(T;\phi).
\end{align*}

\subsection{Invariance of $Z_{\atil,\omega}(M,s)$}\label{subsec:Invariance of invarinat}

\begin{thm}
$Z_{\atil,\omega}(M,s)$ is an invariant of closed spin 3-manifolds.
\end{thm}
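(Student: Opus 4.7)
The strategy is the standard one for state-sum invariants. By the combinatorial calculus of \cite{BP} adapted to the planar setting in Subsection \ref{subsec:planar spin normal o-graph}, any two planar spin normal o-graphs representing the same closed spin 3-manifold $(M,s)$ are connected by a finite sequence of local moves: on the one hand, the planar Reidemeister-type moves of Figure \ref{fig:planer_reidemeister_type_move} (which relate distinct planar immersions of a fixed spin normal o-graph), and on the other hand, the planar analogues of the branched Matveev--Piergallini and $\mathbb{Z}_2$-weight moves of \cite{BP} (which relate different spin normal o-graphs representing the same $(M,s)$). It thus suffices to verify that $Z_{\atil,\omega}(M,s)$ is unchanged under each of these moves. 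Since every move is supported in a small disc and the admissible edge colorings outside that disc are in natural bijection before and after the move, invariance reduces to a local identity on the summands $\theta(P;\phi)\,W(T;\phi)$.

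For the planar Reidemeister-type moves of Figure \ref{fig:planer_reidemeister_type_move}, the branched ideal triangulation $T$ (and hence $W(T;\phi)$) is unchanged, so only $\theta(P;\phi)$ needs to be checked. Moves that create or annihilate a pair of fake crossings contribute a factor $(-1)^{2\omega(\cdot,\cdot)\omega(\cdot,\cdot)}=1$, and the triangle-type move among fake crossings reduces, after expanding $\theta$, to the 2-cocycle relation (\ref{eq:2-cocycle}) for $\omega$. Moves that slide a cup or cap past a crossing alter the winding number of one edge by $\pm 1$, matched by an alteration of the $\mathbb{Z}_2$-weight through the planar spin condition $w(e)\equiv z(e)\pmod 2$; a direct computation shows that the corresponding change in $\theta$ is again a square and hence trivial.

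The main content lies in the branched 2--3 Matveev--Piergallini move. The left-hand side of the move involves two tetrahedra and the right-hand side three, and with the canonical edge labels of Figure \ref{fig:Admissible coloring of tetrahedra} the ratio of $W$-factors on the two sides is exactly the ratio of the two sides of the super 3-cocycle relation (\ref{eq:3-supercocycle}), up to the sign $(-1)^{\omega(g,h)\omega(k,l)}$. The key geometric observation is that the planar 2--3 move creates (or destroys) exactly one fake crossing, and the four edges selected by the branching that define the $\theta$-contribution of this fake crossing are precisely $g,h,k,l$ in the order prescribed by (\ref{eq:3-supercocycle}); hence the $\theta$-factor of the new fake crossing cancels the sign in (\ref{eq:3-supercocycle}) and the two expressions agree. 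The remaining $\mathbb{Z}_2$-weight moves and the lune/bubble-type moves are handled analogously, combining trivial pairings of fake crossings with further applications of (\ref{eq:2-cocycle}).

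The main obstacle is the precise combinatorial matching in the 2--3 move: one must verify carefully that the single fake crossing created or destroyed by the move carries exactly the four colors $(g,h,k,l)$ of (\ref{eq:3-supercocycle}) in the order dictated by the branching, rather than in some permutation for which the sign would double instead of cancel. This matching is exactly what the planar refinement of the Benedetti--Petronio calculus and the precise definition of $\theta$ via the branching-selected edges of a fake crossing are designed to guarantee.
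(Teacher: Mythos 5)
Your overall architecture coincides with the paper's: reduce to the local moves of the (planarized) Benedetti--Petronio calculus, fix an admissible coloring, and check a local identity for each move; the planar Reidemeister-type moves and H-moves come down to the 2-cocycle condition \eqref{eq:2-cocycle}, the branched 0--2 move is a trivial cancellation, and the branched MP-moves produce exactly one fake crossing whose $\theta$-factor matches the sign in the super 3-cocycle condition \eqref{eq:3-supercocycle} (the paper realizes this by the substitution $g\to l$, $h\to l^{-1}g$, $k\to h$, $l\to k$). Up to that point your proposal is a faithful, if less detailed, account of the actual proof.

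However, there is a genuine gap: you dispose of the ``remaining $\mathbb{Z}_2$-weight moves'' as being ``handled analogously, combining trivial pairings of fake crossings with further applications of \eqref{eq:2-cocycle}.'' This does not cover the spin CP-move of Figure \ref{fig:Spin_CP_moves}, which is an essential move of the calculus (it is the move that ties the $\mathbb{Z}_2$ weights to the spin structure) and whose verification is \emph{not} analogous to the others and does \emph{not} follow from \eqref{eq:2-cocycle} alone. Its planar representation introduces three new true vertices, four new circuits colored $g,1,g^{-1},1$, and fake crossings contributing $(-1)^{\omega(g,g^{-1})\omega(1,1)+\omega(g,1)\omega(1,1)}$, and to see that the local scalars agree one must extract degenerate instances of the super 3-cocycle condition \eqref{eq:3-supercocycle}, namely $\atil(1,g,1)=(-1)^{\omega(1,g)\omega(1,g)}$ (hence $\atil(1,g,1)=\atil(1,g,1)^{-1}$), $\atil(1,1,1)=(-1)^{\omega(1,1)\omega(1,1)}$, and $\atil(1,g,g^{-1})^{-1}=(-1)^{\omega(g,g^{-1})\omega(1,1)+\omega(1,1)\omega(1,1)}\,\atil(1,1,g)$. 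Since the paper does not assume $(\atil,\omega)$ is normalized, none of these factors can be waved away, and without this computation the invariance claim is incomplete. A secondary, smaller imprecision: the planar Reidemeister-type moves relate two planar immersions of the \emph{same} spin normal o-graph, so they preserve the winding numbers mod 2 by definition; they do not ``alter the winding number of one edge by $\pm 1$ matched by an alteration of the $\mathbb{Z}_2$-weight'' --- only the framed R1 move changes winding numbers, and it does so by $\pm 2$ (cf.\ Remark \ref{rem:Z2 weights}).
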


\begin{proof}
In order to prove the theorem, let us rewrite the definition in a more suitable form for the proof.
Given a planar spin normal o-graph $P$, replace each vertex and edge with the circuits, as shown in the figure below.
Note that if an edge connects different types of true vertices, we add a twist in the first and second circuits.

\begin{figure}[H]
    \centering
    \includegraphics[scale=0.95]{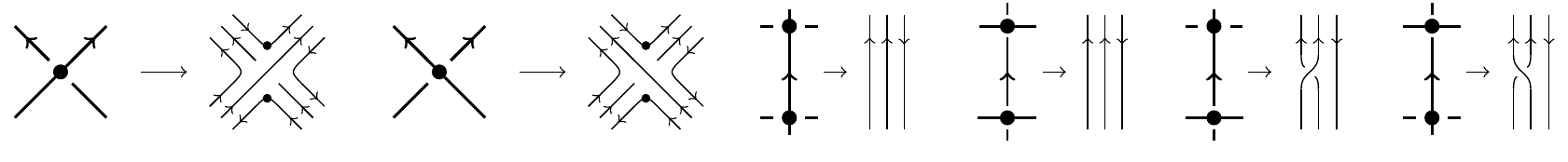}
    \caption{Union of circuits associated to normal o-grpah.}
    \label{fig:heegaard diagram}
\end{figure}

The replaced diagram is a collection of closed oriented circuits and it will be denoted as $E(P)$.
Geometrically, these circuits represent the boundary of 2-cells of the dual polyhedron $T^*$.
From this point of view, the admissible coloring $\phi\colon E\to G$ gives an ``admissible'' coloring $\phi\colon E(P)\to G$.

Let us define what an admissible coloring of $E(P)$ is.
In the case of $E$, we used the branching structure of $T$ to define the ordering of the edges of the face in order to define the admissible conditions.
We do the same for the $E(P)$, but here the reader should be careful about how the circuits are ordered.
The ordering of the three circuits for a replaced edge depends on the type of the vertex an edge is going towards or coming from; if the edge is going towards or coming from a vertex of type $+$, the ordering of the three circuits near the type $+$ vertex is from left to right.
If the edge is going towards or coming from a vertex of type $-$, the ordering of the three circuits near the type $-$ vertex is: the first one is the middle circuit, the second one is the left-most circuit and the third one is the right-most circuit.
Note that if the edge of $P$ connects different types of vertices, there was a twist between the left and middle circuits, thus preserving the ordering.

Using this ordering, the admissible coloring is a map $\phi\colon E(P)\to G$ satisfying the following condition for each replaced edge of $P$:

\begin{figure}[H]
    \centering
    \includegraphics{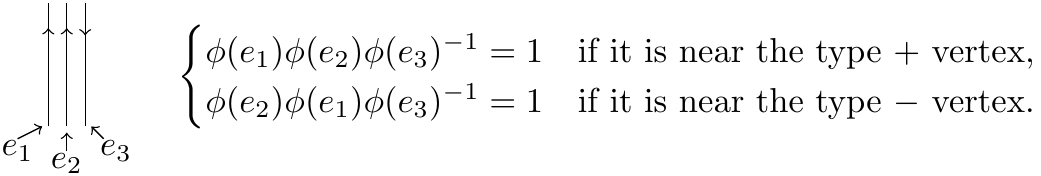}
\end{figure}

By using the admissible coloring $\phi$ of the $E(P)$, the scalar $\theta(P;\phi)W(T;\phi)$ is redefined as follows: for each vertex of type $+$, associate the scalar $\atil(g,h,k)$; for type $-$, associate the scalar $\atil(g,h,k)^{-1}$; and for each fake crossing associate the scalar $(-1)^{\omega(g,h)\omega(k,l)}$ and take the product of all the associated scalars (see the figure below).
Here. for the scalar $(-1)^{\omega(g,h)\omega(k,l)}$, the input of the $\omega(-,-)$ should match the ordering of the circuits (the scalar for fake crossing in the figure below is for edges near the type $+$ vertex).

\begin{figure}[H]
    \centering
    \includegraphics{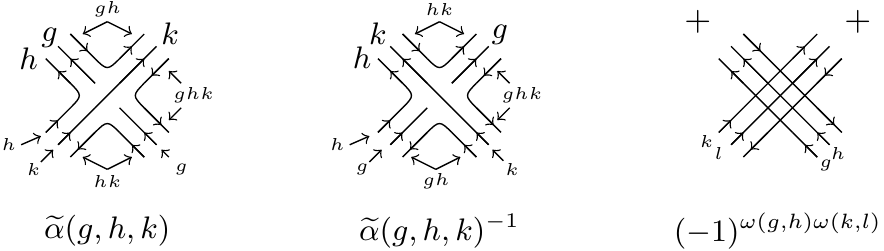}
\end{figure}

Now, let us show the invariance of $Z_{\atil,\omega}(M,s)$.
Recall that the two spin normal o-graphs defining the same spin 3-manifold are connected by the moves in Figures \ref{fig:Reidemeister_type_moves} to \ref{fig:Z_2_branched_MP_move}.
Since the invariant uses the planar spin o-graph, we need to represent the moves in Figures \ref{fig:H-moves} to \ref{fig:Z_2_branched_MP_move} in a planar way.
More formally, let $\Gamma_s$ and $\Gamma_s^{\prime}$ be spin normal o-graphs, where $\Gamma_s^{\prime}$ is obtained from the $\Gamma_s$ with one of the moves in Figures \ref{fig:H-moves} to \ref{fig:Z_2_branched_MP_move} applied.
Let $P$ and $P^{\prime}$ be planar spin o-graphs of $\Gamma_s$ and $\Gamma_s^{\prime}$.
Then, ``the planar representation of the move" means that we need to represent the move up to moves in Figure \ref{fig:planer_reidemeister_type_move}, so that the winding number modulo 2 of the edges of the $P$ and $P^{\prime}$ matches with the $\mathbb{Z}_2$ weights of the $\Gamma_s$ and $\Gamma_s^{\prime}$.
Then, two planar spin normal o-graphs defining the same spin 3-manifold are connected by the moves in Figure \ref{fig:planer_reidemeister_type_move} and the planar representation of the moves in Figures \ref{fig:H-moves} to \ref{fig:Z_2_branched_MP_move}.

First, let us show that for a fixed admissible coloring $\phi$, the scalar $\theta(P;\phi)$ is invariant under the moves in Figures \ref{fig:planer_reidemeister_type_move} and \ref{fig:H-moves}.
The invariance of all the moves in Figure \ref{fig:planer_reidemeister_type_move}, except the ones involving a true vertex, is straightforward to see.
Let us show the invariance of the Reidemeister 3 type move involving type $+$ crossing.
As usual, we show the invariance of a local scalar before and after the move.

\begin{figure}[H]
    \centering
    \includegraphics{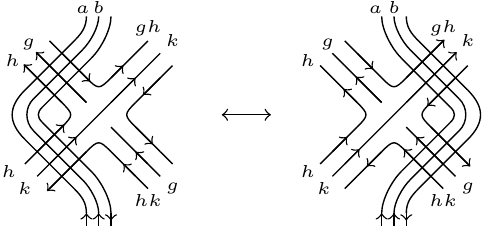}
\end{figure}

\begin{align*}
    (-1)^{\omega(a,b)\omega(g,h)}(-1)^{\omega(a,b)\omega(h,k)} &= (-1)^{\omega(a,b)(\omega(g,h)+\omega(h,k))}
    =(-1)^{\omega(a,b)(\omega(gh,k)+\omega(g,hk))}\\
    &=(-1)^{\omega(a,b)\omega(gh,k)}(-1)^{\omega(a,b)\omega(g,hk)}\\
\end{align*}
\noindent
where we used the 2-cocycle condition \eqref{eq:2-cocycle} of $\omega$ for the second equality.

For the H-moves in Figure \ref{fig:H-moves}, the planar representation of this move is 

\begin{figure}[H]
    \centering
    \includegraphics{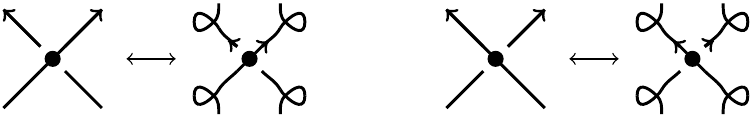}
\end{figure}
\noindent
The invariance of the H-moves again follows from the 2-cocycle condition \eqref{eq:2-cocycle} of $\omega$.

Now, let us see the invariance of $\theta(P;\phi)W(T;\phi)$ for the local moves in Figure \ref{fig:Z_2_branched_0-2_move} to Figure \ref{fig:Z_2_branched_MP_move}.
For these moves, given an admissible coloring $\phi$ of the closed circuits $E(P)$, there is a unique admissible coloring $\phi^{\prime}$ of $E(P^{\prime})$, where $P^{\prime}$ is a planar spin normal o-graph after the move, so that the $\phi^{\prime}$ defines the same coloring one the closed circuits coming from the $E(P)$, and all the admissible coloring of $E(P^{\prime})$ are such.
Thus, we prove the invariance of $\theta(P;\phi)W(T;\phi)$ for a fixed admissible coloring $\phi$.

For the branched 0-2 move in Figure \ref{fig:Z_2_branched_0-2_move}, the planar representation of the move and the admissible coloring is:

\begin{figure}[H]
    \centering
    \includegraphics[scale=1]{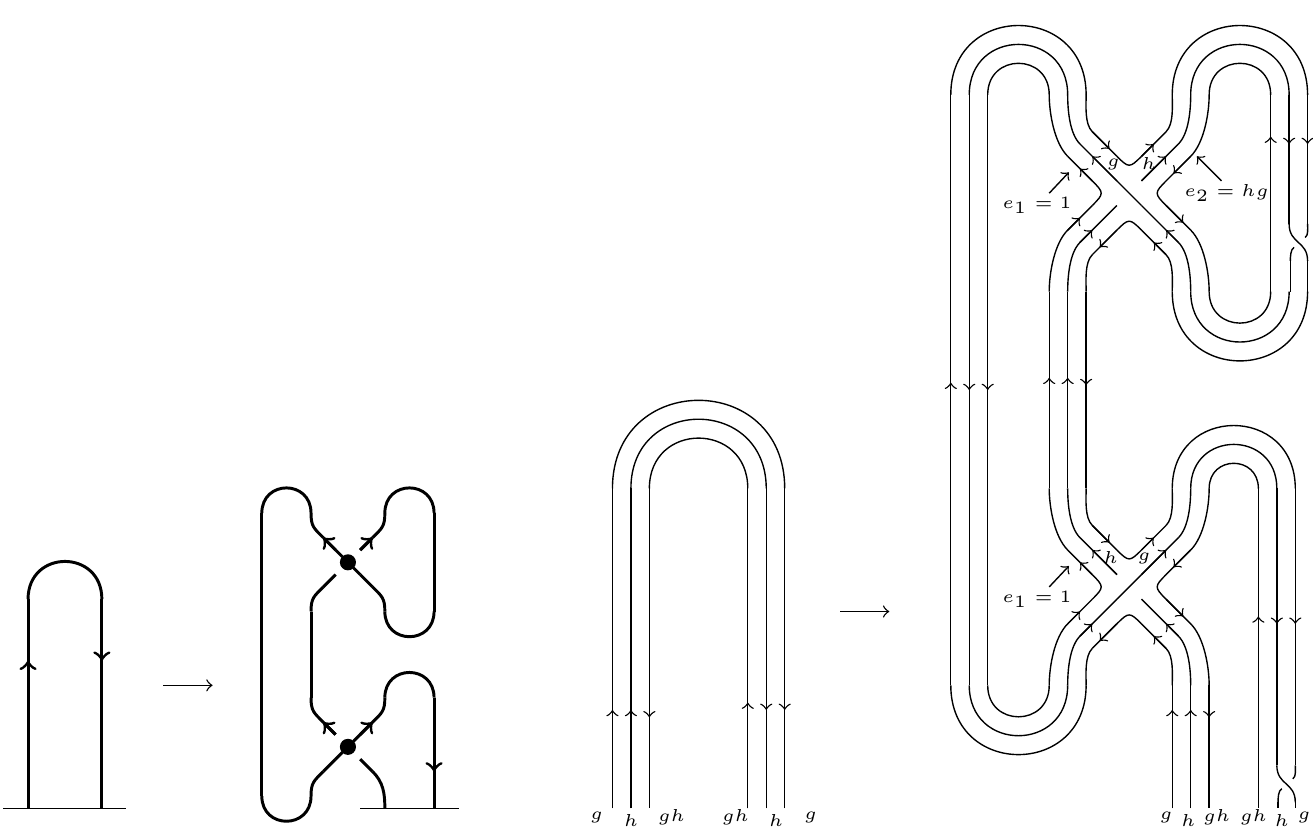}
\end{figure}
\noindent

Here, after the move, there are two new closed circuits, $e_1$ and $e_2$.
If we assume that the admissible coloring $\phi^{\prime}$ is induced from the coloring before the move, the colorings of the circuits are given by $\phi^{\prime}(e_1)=1$ and $\phi^{
\prime}(e_2)=hg$.
In this case, the planar representation does not have any fake crossings, and locally the scalar associated before and after the move is $1$ and $\atil(h,1,g)\,\atil(h,1,g)^{-1}$ which are equal.
Thus, the scalar $\theta(P;\phi)W(T;\phi)$ is invariant under the branched 0-2 move.

Next, let us check the moves in Figure \ref{fig:Z_2_branched_MP_move} with a fixed $\phi$.
We observe that even before the planar representation, the moves carry a graph with two true vertices to a graph with three true vertices with a fake crossing that exactly matches the condition of a super 3-cocycle \eqref{eq:3-supercocycle}.
In order to see the invariance, we first need a planar representation of the moves, which might add new fake crossings.
But in the case of the moves in Figure \ref{fig:Z_2_branched_MP_move}, possibly applying H-move in Figure \ref{fig:H-moves}, it can be checked that the planar representation does not contain any fake crossings except the one coming from the original move.
For example, the planar representation of the move in the top left-hand side of Figure \ref{fig:Z_2_branched_MP_move} is:

\begin{figure}[H]
    \centering
    \includegraphics[scale=0.8]{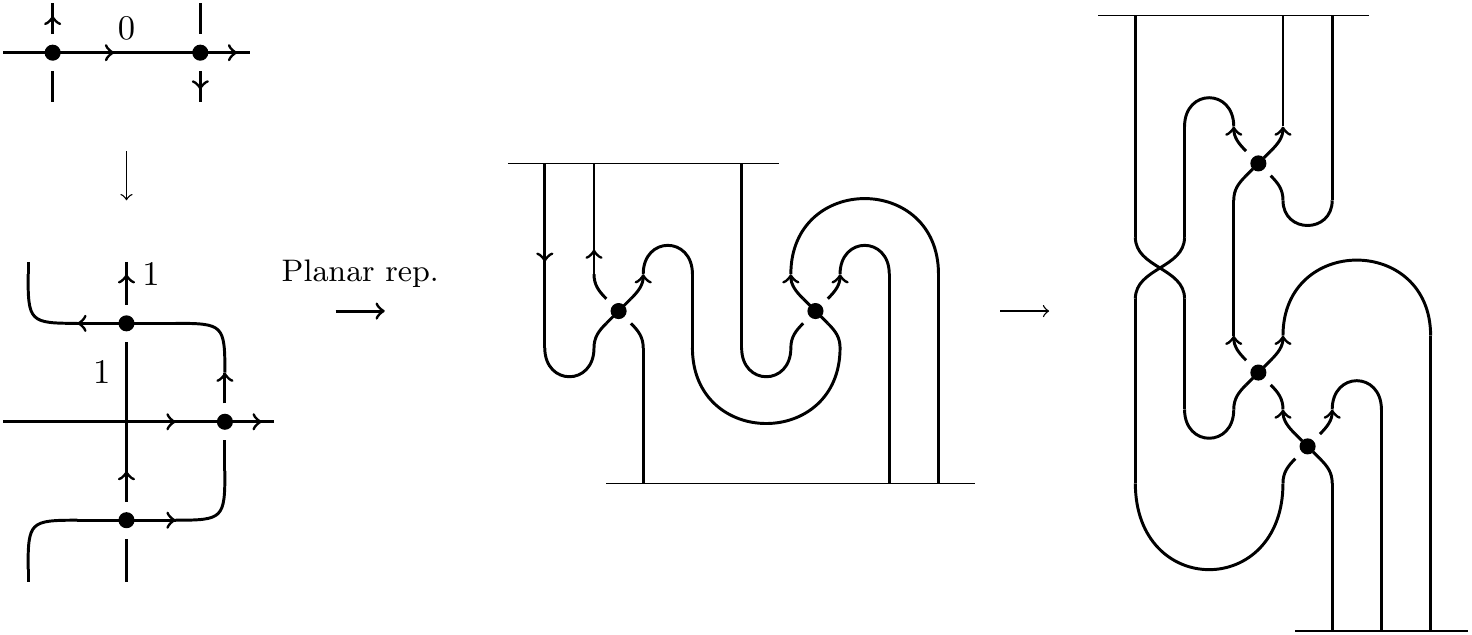}
\end{figure}
\noindent
Thus, we see that indeed there are no fake crossings except the one coming from the original move.

The admissible coloring of this particular move is given by:

\begin{figure}[H]
    \centering
    \includegraphics{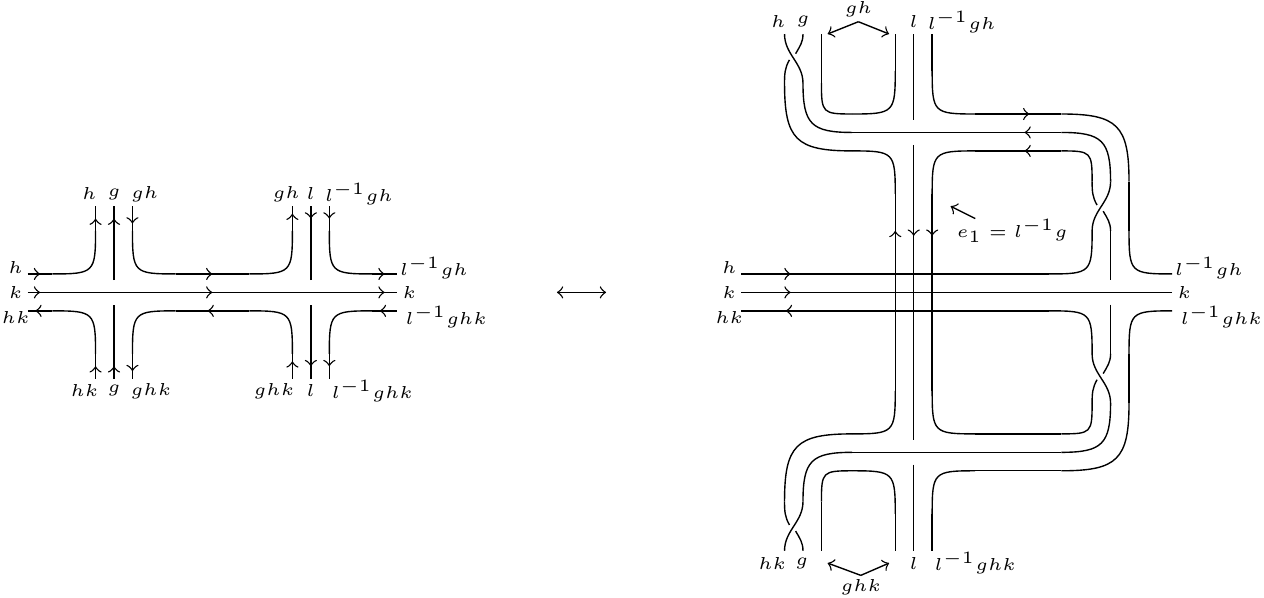}
\end{figure}
\noindent
The local scalars before and after the move are:
\begin{align*}
    &\atil(g,h,k)\,\atil(l,l^{-1}gh,k)^{-1} \\
    &(-1)^{\omega(l,l^{-1}g)\,\omega(h,k)}\,\atil(l,l^{-1}g,h)\,\atil(l^{-1}g,h,k)\,\atil(l,l^{-1}g,hk)^{-1}
\end{align*}
Thus we need to show that these two scalars are the same, i.e.,
\begin{align}
    \atil(g,h,k)\,\atil(l,l^{-1}gh,k)^{-1}&=(-1)^{\omega(l,l^{-1}g)\,\omega(h,k)}\,\atil(l,l^{-1}g,h)\,\atil(l^{-1}g,h,k)\,\atil(l,l^{-1}g,hk)^{-1}\\
    \iff \qquad \atil(g,h,k)\,\atil(l,l^{-1}g,hk)&=(-1)^{\omega(l,l^{-1}g)\,\omega(h,k)}\,\atil(l,l^{-1}gh,k)\,\atil(l,l^{-1}g,h)\,\atil(l^{-1}g,h,k)\label{eq:MP-move}
\end{align}
If we set $g\rightarrow l$, $h\rightarrow l^{-1}g$, $k\rightarrow h$ and $l\rightarrow k$ for the super 3-cocycle condition \eqref{eq:3-supercocycle}, the resulting equation is the same as \eqref{eq:MP-move}, thus the equality of \eqref{eq:MP-move} indeed holds.
This shows that the scalar $\theta(P;\phi)W(T;\phi)$ is invariant under a branched MP-move.
Other moves in the Figure \ref{fig:Z_2_branched_MP_move} can be checked in the same way, and all the equalities of the local scalars of the moves are equivalent to the super 3-cocycle condition \eqref{eq:3-supercocycle}.

Finally, let us check the spin CP-move in Figure \ref{fig:Spin_CP_moves}.
The planar representation of the right-hand side of the spin CP-moves is:

\begin{figure}[H]
    \centering
     \includegraphics[scale=0.8]{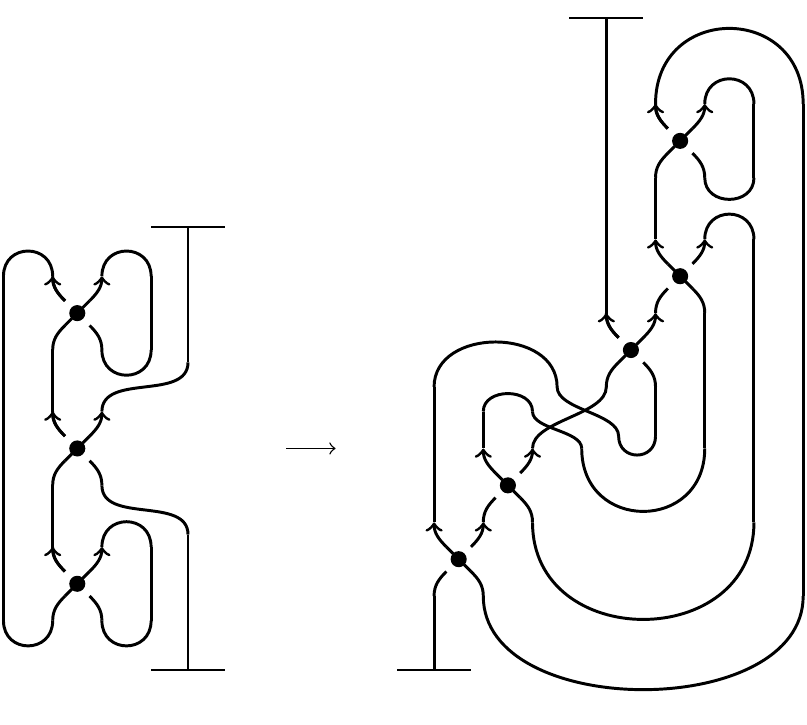}
\end{figure}
\noindent
The admissible coloring is given by:
\begin{figure}[H]
    \centering
    \includegraphics{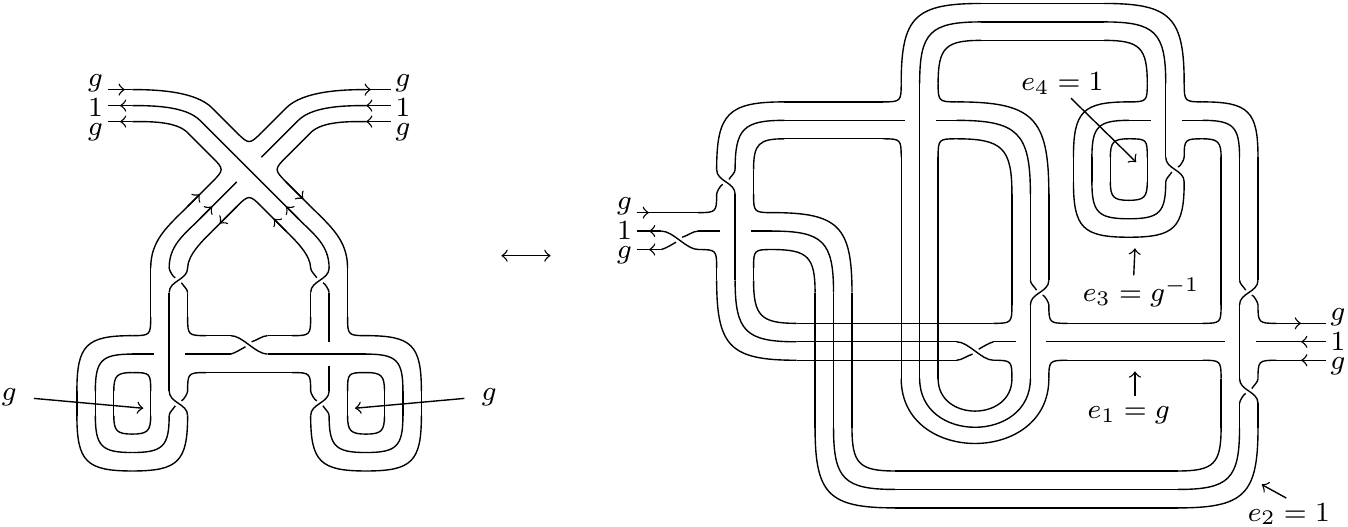}
\end{figure}
Again, after the move, there are 4 new closed circuits $e_1, e_2, e_3$ and $e_4$ whit the canonical coloring given by $g,1,g^{-1}$ and $1$ respectively.
Then, the local tensors are given by:
\begin{align*}
    \text{left hand side:}\quad&\atil(1,g,1)\,\atil(1,1,g),\atil(g,1,1)\\
    \text{right hand side:}\quad&(-1)^{\omega(g,g^{-1})\omega(1,1)+\omega(g,1)\omega(1,1)}\,\atil(1,g,1)^{-1}\,\atil(1,g,g^{-1})^{-1}\,\atil(g,1,1)\,\atil(g,1,g^{-1})^{-1}\,\atil(g,1,g^{-1})\\
    &=(-1)^{\omega(g,g^{-1})\omega(1,1)+\omega(g,1)\omega(1,1)}\,\atil(1,g,1)^{-1}\,\atil(1,g,g^{-1})^{-1}\,\atil(g,1,1).
\end{align*}
If we set $g\rightarrow 1$, $h\rightarrow g$, $k\rightarrow 1$ and $l\rightarrow g$ in the equation \eqref{eq:3-supercocycle}, we get $\atil(1,g,1)=(-1)^{\omega(1,g)\omega(1,g)}$, thus $\atil(1,g,1)=\atil(1,g,1)^{-1}$.
If we set $g\rightarrow 1$, $h\rightarrow 1$, $k\rightarrow g$ and $l\rightarrow g^{-1}$ in the equation \eqref{eq:3-supercocycle}, 
\begin{align*}
    \atil(1,g,g^{-1})^{-1}=(-1)^{\omega(g,g^{-1})\omega(1,1)}\atil(1,1,1)^{-1}\atil(1,1,g).
\end{align*}
Since $\atil(1,1,1)=(-1)^{\omega(1,1)\omega(1,1)}$,
\begin{align*}
    \atil(1,g,g^{-1})^{-1}=(-1)^{\omega(g,g^{-1})\omega(1,1)+\omega(1,1)\omega(1,1)}\atil(1,1,g).
\end{align*}
Thus, the right hand side of the local tensor matches with the left hand side and the scalar is indeed in invariant under the right hand side of spin CP-move.
The invariance under the left hand side of spin CP-move can be checked similarly, thus $Z_{\atil,\omega}(M,s)$ is invariant of closed spin 3-manifold.
\end{proof}

\begin{rem}\label{rem:Z2 weights}
The invariant we defined in this paper can be defined on the (closed) normal o-graph without $\mathbb{Z}_2$ weights.
But in this case, the scalar is not invariant under the R1-type move in Figure \ref{fig:Reidemeister_type_moves} and is only invariant under the framed R1 move in Figure \ref{fig:planer_reidemeister_type_move}.
This suggests that in order to make the scalar to be an invariant, we need to control the number of the ``twists" in the edges, and in our case, the $\mathbb{Z}_2$ weights on the spin normal o-graph are exactly what we need to control these ``twists."
\end{rem}

\subsection{Example of invariant}\label{subsec:Example of spin 3-manifold}

For $p\geq 1$, let $L(p,1)$ be the lens space.
Recall that 
\begin{align*}
H^1(L(p,1);\mathbb{Z}_2)=
    \begin{cases}
        0 & \text{if $p$ is odd,} \\
        \mathbb{Z}_2 & \text{if $p$ is even.}
    \end{cases}
\end{align*}
For any $p$, the following spin normal o-graph with p true vertices represents a spin $L(p,1)$
\begin{figure}[H]
    \centering
    \includegraphics{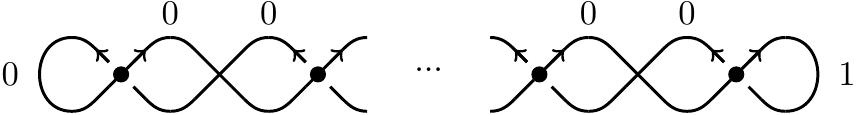}
\end{figure}
Thus, when p is odd, the above diagram represents $L(p,1)$ with a unique spin structure.
When p is even, $L(p,1)$ has two distinct spin structures and the other spin normal o-graphs are given by
\begin{figure}[H]
    \centering
    \includegraphics{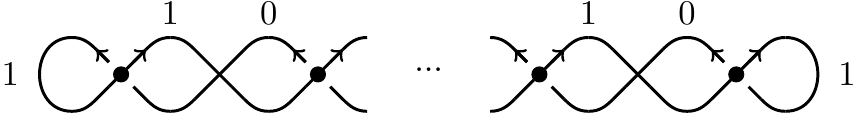}
\end{figure}
Let us consider the case for p=2.
For a super 3-cocycle $(\atil,\omega)$, the invariant is:
\begin{align*}
    Z_{\atil,\omega}(L(2,1),s_1) &= \sum_{g^2=1}(-1)^{\omega(g,g)+\omega(g,1)}\atil(g,1,g)\atil(g,g,g)\\
    Z_{\atil,\omega}(L(2,1),s_2) &= \sum_{g^2=1}\atil(g,1,g)\atil(g,g,g)
\end{align*}
If we set $G=\mathbb{Z}_2$ with the super 3-cocycle given by the Example \ref{ex:3-supercocycle of cyclic group}, the invariant is
\begin{align*}
    Z_{\atil,\omega}(L(2,1),s_1) &=1-i\\
    Z_{\atil,\omega}(L(2,1),s_2) &=1+i
\end{align*}
and we see that the invariant is sensible to the spin structure.

\begin{appendices}
\section{Local moves of spin normal o-graph}\label{sec:Local moves of spin normal o-graph}

Here, we show all the local moves of the spin normal o-graph.

\begin{figure}[H]
    \centering
    \includegraphics{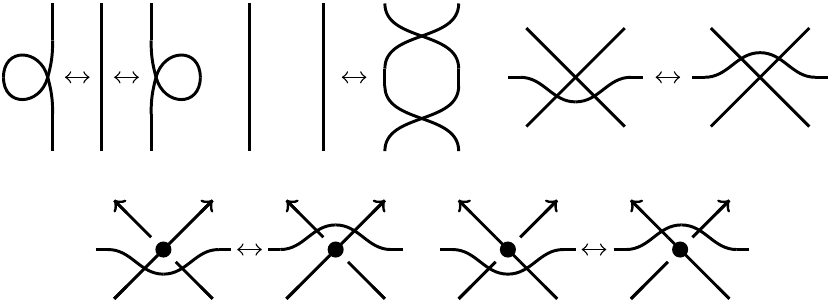}
    \caption{Reidemeister type moves for spin normal o-graph}
    \label{fig:Reidemeister_type_moves}
\end{figure}

\begin{figure}[H]
  \begin{minipage}[b]{0.45\linewidth}
    \centering
    \includegraphics[keepaspectratio, scale=1]{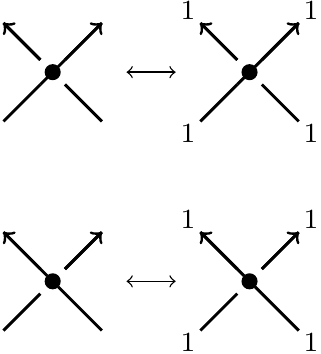}
    \caption{H moves.}
    \label{fig:H-moves}
  \end{minipage}
  \begin{minipage}[b]{0.45\linewidth}
    \centering
    \includegraphics[keepaspectratio, scale=1]{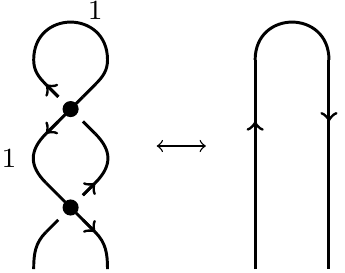}
    \caption{Branched 0-2 move.}
    \label{fig:Z_2_branched_0-2_move}
  \end{minipage}
\end{figure}

\begin{figure}[H]
    \centering
    \includegraphics[scale=0.8]{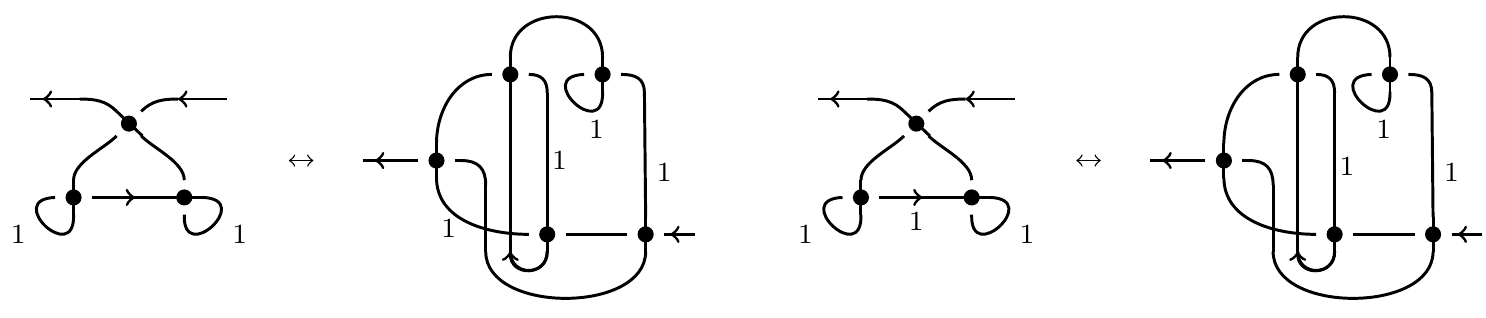}
    \caption{Spin CP-move.}
    \label{fig:Spin_CP_moves}
\end{figure}

\begin{figure}[H]
    \centering
    \includegraphics[scale=0.8]{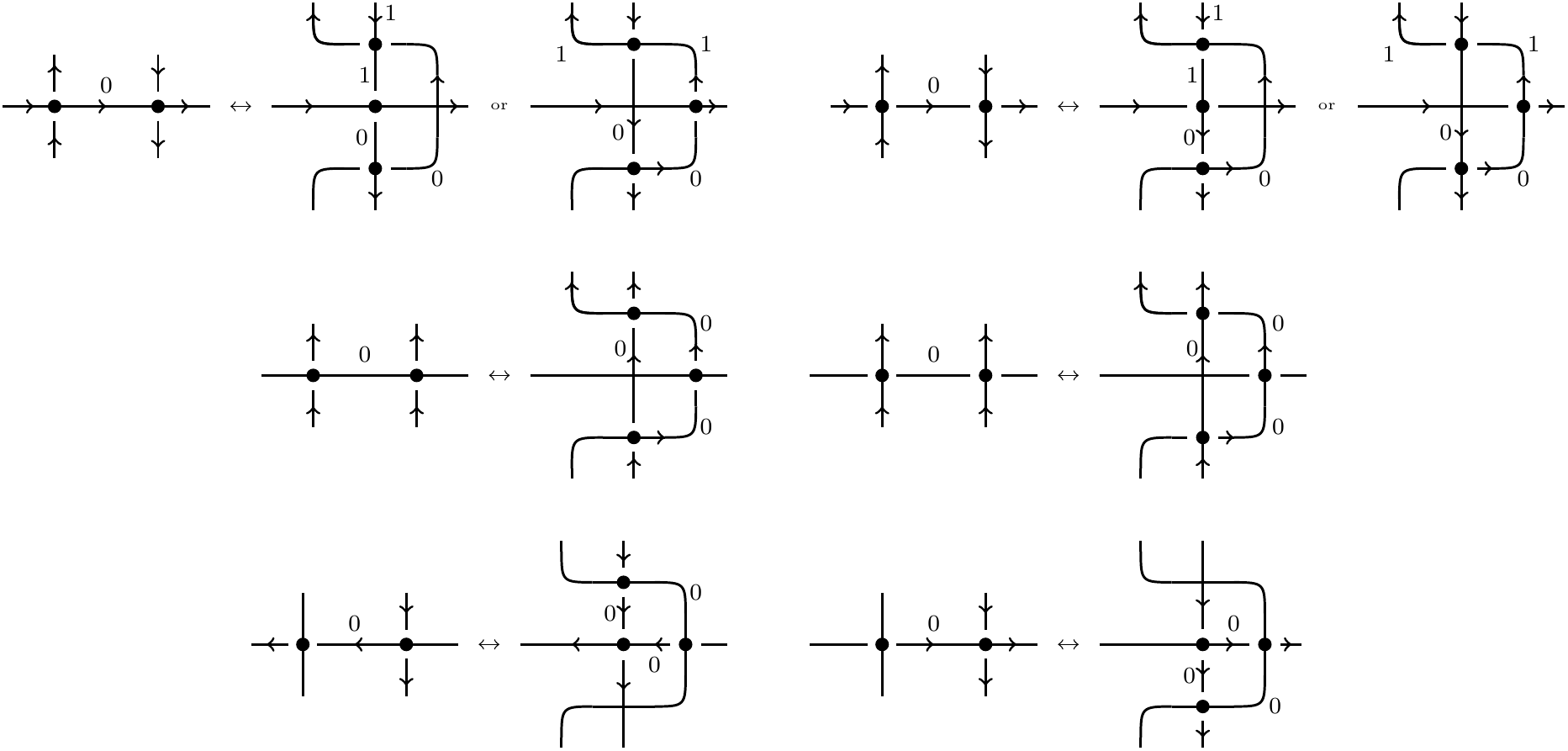}
\end{figure}

\begin{figure}[H]
    \centering
    \includegraphics[scale=0.8]{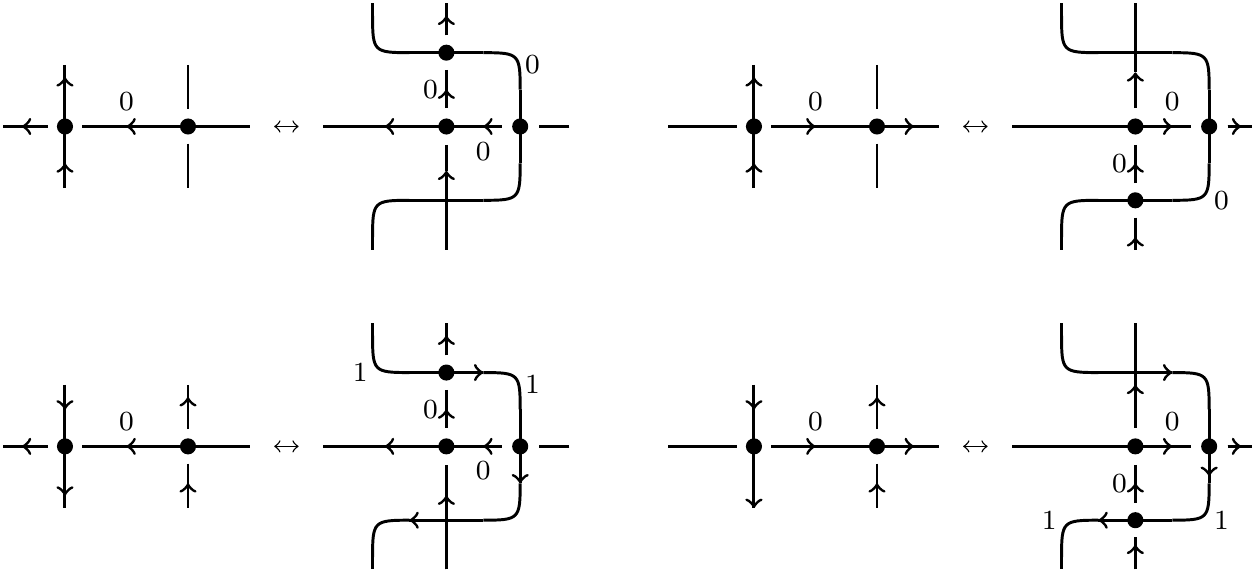}
    \caption{Branched MP-moves. If an edge has multiple weights after the move, the weights should be added in the additive group $\mathbb{Z}_2$.}
    \label{fig:Z_2_branched_MP_move}
\end{figure}

\end{appendices}

\bibliography{references}

@book {BP,
    AUTHOR = {Benedetti, Riccardo and Petronio, Carlo},
     TITLE = {Branched standard spines of {$3$}-manifolds},
    SERIES = {Lecture Notes in Mathematics},
    VOLUME = {1653},
 PUBLISHER = {Springer-Verlag, Berlin},
      YEAR = {1997},
     PAGES = {viii+132},
      ISBN = {3-540-62627-1},
   MRCLASS = {57N10 (57R15)},
MRREVIEWER = {Sergej V. Matveev},
       DOI = {10.1007/BFb0093620},
       URL = {https://doi.org/10.1007/BFb0093620},
}

@article {Wakui,
    AUTHOR = {Wakui, Michihisa},
     TITLE = {On {D}ijkgraaf-{W}itten invariant for {$3$}-manifolds},
   JOURNAL = {Osaka J. Math.},
  FJOURNAL = {Osaka Journal of Mathematics},
    VOLUME = {29},
      YEAR = {1992},
    NUMBER = {4},
     PAGES = {675--696},
      ISSN = {0030-6126},
   MRCLASS = {57N10 (57M25)},
MRREVIEWER = {Sergej V. Matveev},
       URL = {http://projecteuclid.org/euclid.ojm/1200784084},
}

@article {DW,
    AUTHOR = {Dijkgraaf, Robbert and Witten, Edward},
     TITLE = {Topological gauge theories and group cohomology},
   JOURNAL = {Comm. Math. Phys.},
  FJOURNAL = {Communications in Mathematical Physics},
    VOLUME = {129},
      YEAR = {1990},
    NUMBER = {2},
     PAGES = {393--429},
      ISSN = {0010-3616},
   MRCLASS = {81T40 (20J06 53C15 57R20 58E50 81T13 81T70)},
MRREVIEWER = {Michael Kapovich},
       URL = {http://projecteuclid.org/euclid.cmp/1104180750},
}

@article {Saw,
    AUTHOR = {Sawin, Stephen F.},
     TITLE = {Invariants of {S}pin three-manifolds from {C}hern-{S}imons
              theory and finite-dimensional {H}opf algebras},
   JOURNAL = {Adv. Math.},
  FJOURNAL = {Advances in Mathematics},
    VOLUME = {165},
      YEAR = {2002},
    NUMBER = {1},
     PAGES = {35--70},
      ISSN = {0001-8708},
   MRCLASS = {57M27 (16W30 18D10 58J28)},
MRREVIEWER = {Ruth Lawrence},
       DOI = {10.1006/aima.2000.1935},
       URL = {https://doi.org/10.1006/aima.2000.1935},
}

@article {KR,
    AUTHOR = {Kirby, Robion and Melvin, Paul},
     TITLE = {The {$3$}-manifold invariants of {W}itten and
              {R}eshetikhin-{T}uraev for {${\rm sl}(2,{\bf C})$}},
   JOURNAL = {Invent. Math.},
  FJOURNAL = {Inventiones Mathematicae},
    VOLUME = {105},
      YEAR = {1991},
    NUMBER = {3},
     PAGES = {473--545},
      ISSN = {0020-9910},
   MRCLASS = {57M25 (17B37 57N10)},
MRREVIEWER = {Louis H. Kauffman},
       DOI = {10.1007/BF01232277},
       URL = {https://doi.org/10.1007/BF01232277},
}

@article {BR,
    AUTHOR = {Budney, Ryan},
     TITLE = {Combinatorial spin structures on triangulated manifolds},
   JOURNAL = {Algebr. Geom. Topol.},
  FJOURNAL = {Algebraic \& Geometric Topology},
    VOLUME = {18},
      YEAR = {2018},
    NUMBER = {3},
     PAGES = {1259--1279},
      ISSN = {1472-2747},
   MRCLASS = {57R15 (55S35 57R05)},
MRREVIEWER = {Piotr Su\l kowski},
       DOI = {10.2140/agt.2018.18.1259},
       URL = {https://doi.org/10.2140/agt.2018.18.1259},
}

@article{GK,
  title={Spin TQFTs and fermionic phases of matter},
  author={Gaiotto, Davide and Kapustin, Anton},
  journal={International Journal of Modern Physics A},
  volume={31},
  year={2016},
  publisher={World Scientific}
}

@article{GW,
  title = {Symmetry-protected topological orders for interacting fermions: Fermionic topological nonlinear $\ensuremath{\sigma}$ models and a special group supercohomology theory},
  author = {Gu, Zheng-Cheng and Wen, Xiao-Gang},
  journal = {Phys. Rev. B},
  volume = {90},
  issue = {11},
  numpages = {59},
  year = {2014},
  publisher = {American Physical Society},
  url = {https://link.aps.org/doi/10.1103/PhysRevB.90.115141}
}

@article{Tata,
  title={Geometrically Interpreting Higher Cup Products, and Application to Combinatorial Pin Structures},
  author={Sri Ramya Tata},
  journal={arXiv:2008.10170},
  year={2020}
}

@article {Kob,
    AUTHOR = {Kobayashi, Ryohei},
     TITLE = {Pin {TQFT} and {G}rassmann integral},
   JOURNAL = {J. High Energy Phys.},
  FJOURNAL = {Journal of High Energy Physics},
      YEAR = {2019},
    NUMBER = {12},
     PAGES = {014, 25},
      ISSN = {1126-6708},
   MRCLASS = {81T45},
       DOI = {10.1007/jhep12(2019)014},
       URL = {https://doi.org/10.1007/jhep12(2019)014},
}

@article{GJ,
  title={Quadratic Functions of Cocycles and Pin Structures},
  author={Brumfiel, Greg and Morgan, John},
  journal={arXiv:1808.10484},
  year={2018}
}
\bibliographystyle{plain}

\end{document}